\newtheorem {theorem}{Theorem}
\newtheorem {lemma}[theorem]{Lemma}
\newtheorem {proposition}[theorem]{Proposition}
\theoremstyle{remark}
\newtheorem {remark}[theorem]{Remark}
\newtheorem {question}[theorem]{Question}
\newcommand\fiber{\varphi}
\newcommand{\co}{\mskip0.5mu\colon\thinspace}   
\def\zz {{\mathbb{Z}}}
\def\bZ {{\mathbb{Z}}}
\def\rr {{\mathbb{R}}}
\def\qq {{\mathbb{Q}}}
\DeclareMathOperator{\rk}{rk}
\def\HFhat{\widehat{HF}}
\title{Non-fibered L-space knots}
\author[Tye Lidman]{Tye Lidman}
\thanks{The first author was supported by a UCLA Dissertation Year Fellowship.}
\address {Department of Mathematics, UT Austin, 1 University Station, Austin, TX 78712}
\email {tlid@math.utexas.edu}
\author[Liam Watson]{Liam Watson}
\thanks{The second author was partially supported by an NSERC Postdoctoral Fellowship.}
\address {School of Mathematics and Statistics, University of Glasgow, 15 University Gardens, Glasgow, UK, G128QW}
\email {liam.watson@glasgow.ac.uk}
\begin{document}

\begin{abstract}
We construct an infinite family of knots in rational homology spheres with irreducible, non-fibered complements, for which every non-longitudinal filling is an L-space.
\end{abstract}

\maketitle

The Heegaard Floer homology of a rational homology three-sphere $Y$ is an abelian group $\HFhat(Y)$ satisfying $\rk\HFhat(Y)\ge|H_1(Y;\bZ)|$ \cite{hfinvariance}. When equality is realized in this bound, $Y$ is called an L-space, and any knot in $Y$ admitting a non-trivial L-space surgery is called an L-space knot \cite{hflens}. A  result of Ghiggini \cite{Ghiggini} and Ni \cite{yihfkfibered} shows that L-space knots in the three-sphere must be fibered. Since manifolds with finite fundamental group provide examples of L-spaces,\footnote{Ozsv\'ath and Szab\'o show that manifolds admitting elliptic geometry are L-spaces  \cite{hflens}; the Geometrization Theorem \cite{KL2008}  implies that three-manifolds with finite fundamental group admit elliptic geometry.} this result implies that a knot $K$ in $S^3$ admitting a finite filling must be fibered. This observation should be compared with other restrictions related to finite fillings such as the Cyclic Surgery Theorem \cite{CGLS} and its extensions \cite{BZ2001}.

The restriction to knots in $S^3$ is not necessary. It is shown in \cite{bbcw} that a primitive knot\footnote{Recall that a knot $K$ is primitive in $Y$ if $[K]\in H_1(Y;\bZ)$ is a generator.} in an irreducible L-space admitting a non-trivial L-space surgery must be fibered. Irreducibility of the complement is required: removing an unknot from an embedded three-ball in any L-space produces a non-fibered manifold with non-trivial L-space fillings. Even in the general setting of knots in rational homology spheres with irreducible complements fibered is not a necessary condition:

\begin{theorem}\label{thm:maintheorem}
There exist infinitely many irreducible, non-fibered knot complements such that all non-longitudinal Dehn fillings are L-spaces. Moreover, these examples arise as knots in manifolds with finite fundamental group.
\end{theorem}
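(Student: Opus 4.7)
The plan is to construct the examples as regular Seifert fibers in small Seifert fibered rational homology spheres with finite fundamental group. Let $Y$ be Seifert fibered over $S^2$ with three exceptional fibers of orders $(a_1, a_2, a_3)$ satisfying $1/a_1 + 1/a_2 + 1/a_3 > 1$, so that $Y$ admits elliptic geometry and is an L-space; natural candidates are the prism manifolds (type $(2,2,n)$) and the Poincar\'e homology sphere $\Sigma(2,3,5)$. Let $K \subset Y$ be a regular fiber and $M = Y \setminus \nu(K)$. The Seifert fibration of $Y$ restricts to a Seifert fibration of $M$ over the disk orbifold $D^2(a_1, a_2, a_3)$, from which irreducibility of $M$ is immediate. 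Any Dehn filling of $M$ along a slope $\alpha$ transverse to the fiber slope $\varphi$ extends the Seifert fibration to a closed Seifert fibered manifold; if $\alpha = \varphi$, the result is instead a connect sum of lens spaces, hence still an L-space. The rational longitude $\lambda$ of $K$ is the unique slope on $\partial M$ rationally null-homologous in $M$, and $M(\lambda)$ fails to be a rational homology sphere.

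To verify that every non-longitudinal filling is an L-space, I would appeal to the Ozsv\'ath--Szab\'o and Lisca--Stipsicz characterization of Seifert fibered L-spaces. For each slope $\alpha \neq \lambda$ not parallel to $\varphi$, the filled manifold is Seifert fibered with four exceptional fibers (the original three, together with a new one of order $|\Delta(\alpha,\varphi)|$) and a modified Euler number depending on $\alpha$; the L-space criterion reduces to an algebraic condition on these invariants, which must hold uniformly across all such $\alpha$. For non-fiberedness of $M$, the ambient Seifert fibration on $Y$ has nonzero Euler number, so the restricted fibration on $M$ admits no horizontal section and $M$ cannot fiber over $S^1$; equivalently, Wang's formula yields a non-monic Alexander polynomial for $K$.

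To produce infinitely many distinct complements, I would vary the ambient manifold within the prism family $(2,2,n)$ (or another one-parameter family of Seifert fibered L-spaces with finite fundamental group), yielding a sequence of non-homeomorphic pairs $(M_n, K_n)$ distinguished by the orbifold invariants of the base. The main obstacle is the uniform verification of the L-space property: each non-longitudinal filling modifies the Seifert invariants in a prescribed way, and the Lisca--Stipsicz criterion must hold across the entire infinite range of filling slopes. Choosing $Y$ so that this holds, while simultaneously keeping $M$ non-fibered, is the delicate balancing act at the core of the argument.
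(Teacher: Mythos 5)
Your overall strategy---remove a regular fiber from a Seifert fibered L-space with finite fundamental group and study its fillings---matches the paper's in spirit, but the specific choice of an \emph{orientable} base orbifold $S^2(a_1,a_2,a_3)$ breaks both halves of the theorem, and the ``delicate balancing act'' you defer to the end cannot be resolved within this family. First, the complement $M$ of a regular fiber in a Seifert fibered space over $S^2(a_1,a_2,a_3)$ \emph{is} fibered: because the base $D^2(a_1,a_2,a_3)$ is orientable, $M$ contains a connected horizontal surface (take $\operatorname{lcm}(a_i)$ parallel copies of a section over the complement of the exceptional fibers and cap off horizontally inside each exceptional solid torus), and cutting along it yields a product $I$-bundle, so $M$ fibers over $S^1$. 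The simplest instance is a torus knot exterior, which is Seifert fibered over $D^2(p,q)$ and certainly fibered. Your stated obstruction---the nonzero Euler number of the closed manifold $Y$---obstructs \emph{closed} horizontal surfaces in $Y$, not properly embedded ones in $M$; the Euler number is not even defined for $M$ without a choice of boundary framing, and the appeal to ``Wang's formula'' does not produce a non-monic Alexander polynomial here.

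Second, the L-space claim fails for every elliptic triple: the non-longitudinal fillings of such an $M$ realize Seifert fibered spaces over $S^2(a_1,a_2,a_3,\Delta(\alpha,\varphi))$ whose Euler numbers sweep densely through an interval of $\mathbb{Q}$, and by the Jankins--Neumann--Naimi realizability criterion a range of these carry horizontal (taut) foliations, hence are not L-spaces by Ozsv\'ath--Szab\'o. Again the torus knot case already shows the phenomenon: negative surgeries on a positive torus knot are not L-spaces, so the L-space slopes form a proper interval rather than the complement of the longitude. The idea your proposal is missing is to make the base orbifold \emph{non-orientable}. The paper removes a regular fiber $K_0$ of the M\"obius-band Seifert structure on the twisted $I$-bundle over the Klein bottle and fills the other boundary component, so the resulting knot complements $M(-,\alpha)$ are Seifert fibered over a M\"obius band with one cone point, the fiber slope coincides with the rational longitude, and every non-longitudinal filling is Seifert fibered over $\mathbb{R}P^2$ with two cone points. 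Such manifolds are L-spaces \emph{unconditionally} by \cite[Proposition 5]{bgw}, which eliminates the Lisca--Stipsicz verification entirely, while non-fiberedness follows because the longitudinal (fiber-slope) filling is $S^2\times S^1\#L(p,q)$, which cannot fiber.
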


In particular, our examples are non-primitive knots in L-spaces. 

Before turning to the construction, we fix some terminology. Fibrations will always be locally trivial surface bundles over a circle and we say the total space fibers. To avoid confusion, we will refer to Seifert fibrations as Seifert structures; these are foliations of a manifold by circles. The base orbifold is the leaf space of such a foliation, where the (possibly empty) collection of cone points records the multiplicities of the exceptional fibers in the Seifert structure. A circle bundle is a Seifert structure for which there are no exceptional fibers.

Given a three-manifold $M$ with torus boundary, a slope $\alpha$ is a primitive class in $H_1(\partial M;\zz)/\{\pm1\}$. We use $M(\alpha)$ to denote Dehn filling along $\alpha$.  If $\partial M = T_1 \cup T_2$, for tori $T_i$, then we denote $\alpha$-filling on $T_1$ (respectively $T_2$) by $M(\alpha,-)$ (respectively  $M(-,\alpha)$). When $M$ admits a Seifert structure, the slope given by a regular fiber in the boundary is called the fiber slope. For background on Seifert structures and Dehn filling we refer the reader to Boyer \cite{Boyersurvey}.  A key fact is that Dehn filling a Seifert manifold with torus boundary along any slope $\alpha$ other than the fiber slope results in a Seifert manifold with a possible additional singular fiber. The multiplicity of this new fiber is $\Delta(\alpha,\fiber)$, the distance between the slopes $\alpha$ and $\phi$ \cite{Heil1974}.

Finally, for knots in rational homology three-spheres recall that there is a preferred slope given by the rational longitude. This slope is characterized by the property that some number of like-oriented parallel copies in the boundary of the knot complement bounds a properly embedded surface. We will refer to this slope as the longitude. Note that an oriented three-manifold $M$ with torus boundary for which $H_1(M;\qq)\cong\qq$ always arises (non-uniquely) as the complement of a knot in a rational homology three-sphere. 

\section{The twisted $I$-bundle over the Klein bottle}\label{sec:prelims}

Let $N$ denote the twisted $I$-bundle over the Klein bottle. As this orientable three-manifold with torus boundary plays a central role in our construction, we will consider its  construction in depth. 

First consider the group $G$ generated by $f,g\co \rr^3\to\rr^3$ where \begin{align*}f(x,y,z)&=(x+1,y,z)\\g(x,y,z)&=(-x,y+1,-z) \end{align*} and consider the non-compact, orientable three-manifold $N^\circ=\rr^3/G$. Note that the $z$-component of $\rr^3$ gives $N^\circ$ the structure of a line bundle, the zero-section of which is a Klein bottle; this is the unique line bundle over the Klein bottle with orientable total space. By restricting the action of $G$ to $\widetilde{N}=\rr^2\times [-\frac{1}{2},\frac{1}{2}]\subset\rr^3$ we obtain the twisted $I$-bundle over the Klein bottle $N=\widetilde{N}/G$.

\begin{figure}[b]
\labellist
	\pinlabel $a$ at 33 300
	\pinlabel $b$ at 385 300
	\endlabellist
\includegraphics[scale=0.35]{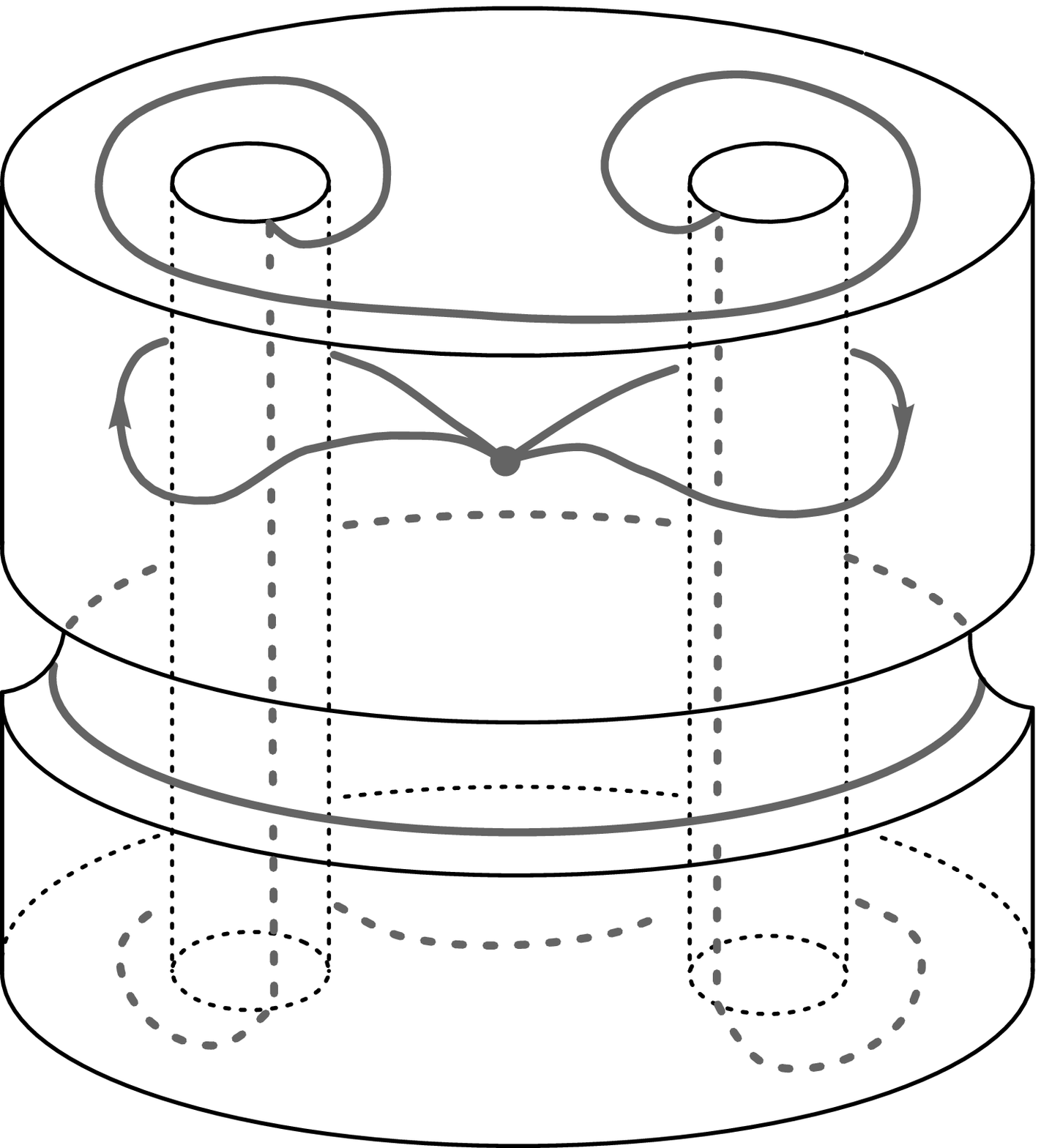}\qquad\quad
\labellist
	\pinlabel $K_0$ at 443 176
	\endlabellist
\includegraphics[scale=0.35]{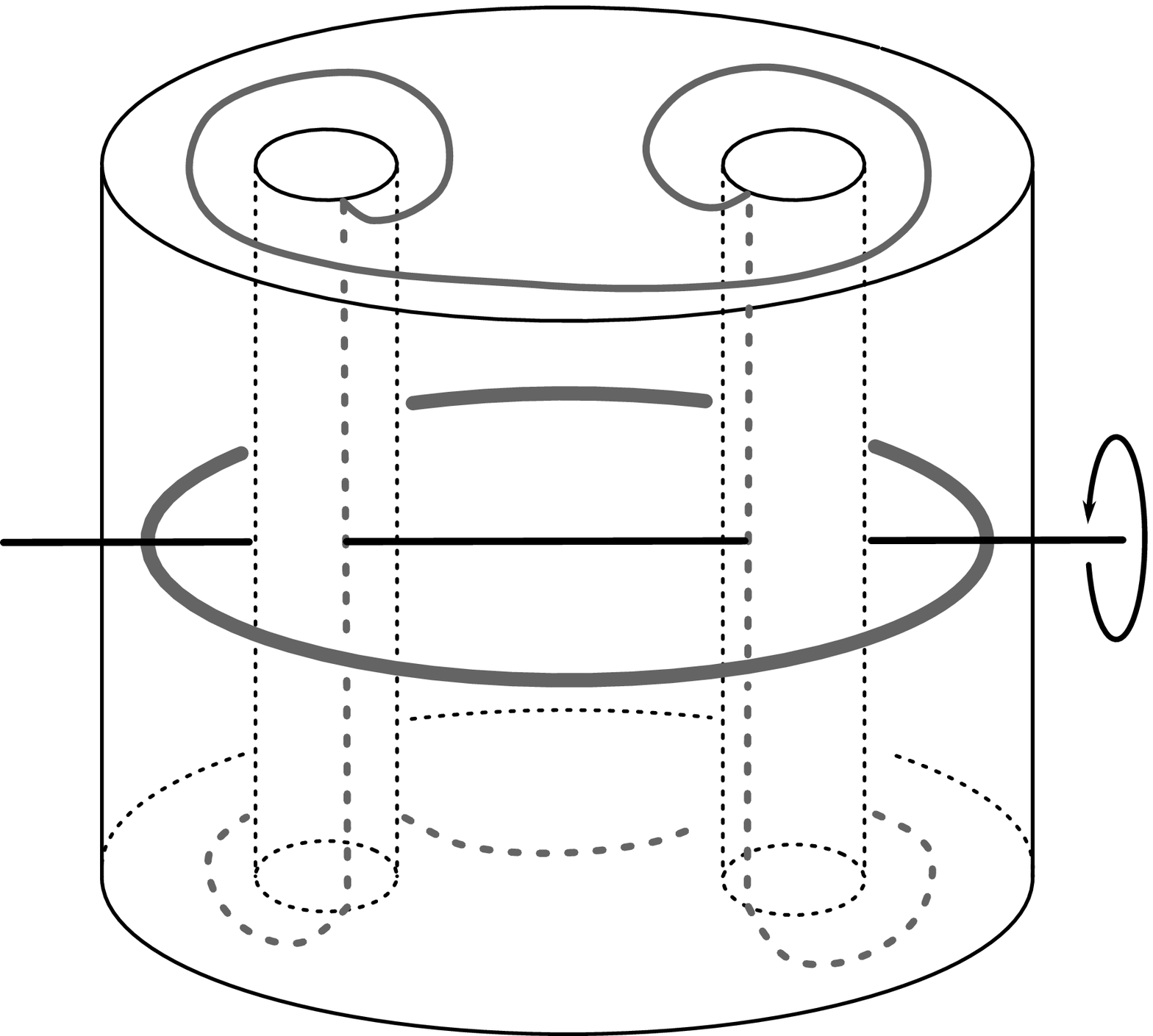}
\caption{Two views of the Heegaard diagram for the twisted $I$-bundle over the Klein bottle $N$. With $a$ and $b$ generating the fundamental group of the genus two handlebody, $N$ is obtained by attaching a handle along a curve in the boundary representing $a^2b^2$ so that $\phi_0\simeq ab$ and $\phi_1\simeq b^2$. On the left, an annulus in the boundary with core representing the element $\phi_0\simeq ab$ may be used to find the fundamental group of $M$, the complement of a regular fiber in the interior of $N$, via HNN extension. On the right, the axis of rotational symmetry shows that the hyperelliptic involution on the handlebody induces a strong inversion on the pair $(N,K_0)$ where $K_0$ is a knot in $N$ isotopic to a regular fiber $\phi_0$ in the interior of $N$.}\label{fig:hnn}\end{figure}

From this description two Seifert structures on $N$ become apparent: the $x$- and $y$-components of $\widetilde{N}$ both determine foliations of $N$ by circles. (This is essentially the observation that the Klein bottle is foliated by circles in two ways.) The leaf space of the foliation described by the $x$-components is a M\"obius strip without cone points. Denote a regular fiber in this Seifert structure by $\phi_0$. The base orbifold of the foliation determined by $y$-components is $D^2(2,2)$, with regular fiber denoted $\phi_1$; this follows readily from a natural Heegaard decomposition which we now describe.

Note that a fundamental domain for $N$ is obtained by taking $[-\frac{1}{2},\frac{1}{2})^2 \times [-\frac{1}{2},\frac{1}{2}] \subset\rr^3$, and removing $D^2\times[-\frac{1}{2},\frac{1}{2}]$ (for some disk of radius less than $\frac{1}{2}$ in the $xy$-plane centered at the origin) gives a genus two handlebody, and hence a Heegaard decomposition for $N$. This Heegaard diagram is described in Figure \ref{fig:hnn}, from which the fundamental group $\pi_1(N) = \langle a,b \mid a^2b^2 \rangle$ may be calculated. Note that since $fgfg^{-1}$ is trivial in the group $G$, the homomorphism determined by $a\mapsto fgf^{-1}$ and $b\mapsto fg^{-1}$ is well-defined and gives an isomorphism $G\cong \langle a,b \mid a^2b^2 \rangle$. Further, by considering a separating disk decomposing the handlebody into solid tori,  it is immediate that $N$ is the union of two solid tori along essential annuli in the boundary. By fixing Seifert structures on each of these solid tori with base orbifolds $D^2(2)$, these annuli are foliated by regular fibers. The identification along these essential annuli therefore extends to a Seifert structure on $N$ with base orbifold $D^2(2,2)$ as claimed.  

Both Seifert structures induce foliations on the torus $\partial N$. Let $\phi_0$  and $\phi_1$ be regular fibers in $\partial N$, and notice that $\Delta(\phi_0,\phi_1)=1$. (These conventions are consistent with \cite[Section 3]{bgw}.) The longitude of $N$ is homotopic to the element $ab$ (this element has order two in the abelianization of $\pi_1(N)$). That is, $\phi_0$ represents the longitude of $N$. Any filling $N(\alpha)$ for which $\alpha\ne\phi_0,\phi_1$ admits a pair of Seifert structures with base orbifolds $\rr P^2(\Delta(\alpha,\phi_0))$ and $S^2(2,2,\Delta(\alpha,\phi_1))$.  We point out that these manifolds always admit elliptic geometry \cite{Scott}.

Now consider a knot $K_0$ in $N$ that is isotopic to a regular fiber $\phi_0$ in the interior of $N$. Define $M$ by removing a neighborhood of $K_0$ from $N$; by construction $M$ inherits a Seifert structure (the base orbifold is a punctured M\"obius band). Now $\partial M = T_1\cup T_2$ where $T_2$ denotes the boundary of a regular neighborhood of $K_0$.

The fundamental group of $M$ is presented by
\[
\pi_1(M)=\langle a,b,t \mid a^2b^2 , [t ,ab] \rangle.
\]
To see this, consult Figure \ref{fig:hnn} and notice that $M$ may be constructed by identifying (disjoint neighborhoods of) each boundary component of the annulus with core $ab$ in $\partial N$.  This gives rise to the HNN extension presented above.  Notice that $M(-,\mu)\cong N$ for any slope on $T_2$ satisfying $\Delta(\mu,\phi_0)=1$.  A preferred choice for $\mu$ is given by a representative of the homotopy class of $t$ in the above presentation.  

\begin{figure}[b]
\labellist \small
	\pinlabel $\phi_1$ at 221 109
	\pinlabel $\longleftarrow$ at 220 90
	\pinlabel $\phi_0$ at 474 109
	\pinlabel $\longrightarrow$ at 475 90
	\endlabellist
\includegraphics[scale=0.35]{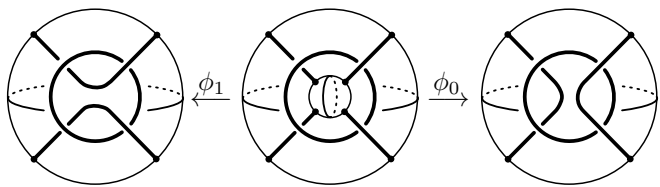}
\caption{The branch set for the manifold $M=M(-,-)$ with branch sets for the fillings $M(\phi_1,-) = N$ and $M(\phi_0, -)$. Notice that $M(\phi_0, -)$ is reducible, containing an $S^2\times S^1$ summand. }\label{fig:montesinos}\end{figure}

A final observation pertains to a natural strong inversion on $(N,K_0)$ that descends to an involution on $M$ with one-dimensional fixed point set. Recall that a strong inversion on $(N,K_0)$ is an orientation preserving involution on $N$ that reverses orientation on $K_0$; such a symmetry is illustrated in Figure \ref{fig:hnn}. The involution on $N$ is induced by the hyperelliptic involution on the genus two handlebody since the attaching curve is fixed (as a set) by this involution. A fundamental domain for this involution is a three-ball, with one dimensional fixed point set. That is, $N$ is the two-fold branched cover of a two-tangle; this is the left-most tangle in Figure \ref{fig:montesinos}. We leave the following step to the reader: the genus two handlebody is the two-fold branched cover of a three-tangle, and attaching the handle closes one of the arcs (the arc meeting the attaching curve) to an unknotted curve in the branch set.   The same construction may be applied to the complement  of $K_0$ in $N$, to see that $M$ is the two-fold branched cover of a tangle in $S^2\times I$. This tangle is shown in Figure \ref{fig:montesinos}.

Towards a proof of Theorem~\ref{thm:maintheorem}, our interest  is in the family of manifolds \[\{M(-,\alpha) \mid \text{for\ any\ slope\ } \alpha \text{\ with\ } \Delta(\alpha,\phi_0)>1\}.\] Notice that each manifold in this set admits a Seifert structure with base orbifold a M\"obius band with a single cone point of order $\Delta(\alpha,\phi_0)$.  Since $M(\phi_1,\alpha)$ admits a Seifert structure with base orbifold $S^2(2,2,n)$  it follows that $M(-,\alpha)$ is the complement of a knot in an elliptic manifold for all $\alpha$.

\section{The proof of Theorem~\ref{thm:maintheorem}}\label{sec:elliptic}

Let $M$ be the complement of $K_0$ in the twisted $I$-bundle over the Klein bottle $N$. We assume all of the notation introduced in the previous section.  

\begin{lemma}\label{lem:fibreablefilling}
Fix a slope $\alpha$ on $T_2$ with $\Delta(\alpha,\phi_0) = p$.  Then
\[
M(\phi_0,\alpha) =
\left\{
\begin{array} {rl}
S^2 \times S^1 \# S^2 \times S^1 &\text{ if }\quad p = 0, \\[2pt]
S^2 \times S^1 \# L(p,q) & \text{ if } \quad p > 1, \\[2pt]
S^2 \times S^1 & \text{ if }\quad p = 1.
\end{array} \right.
\]
\end{lemma}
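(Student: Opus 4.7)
The plan is to carry out the filling in two stages: I will first fill $T_1$ along $\phi_0$, obtaining an intermediate manifold $Y := M(\phi_0, -)$ with one torus boundary $T_2$, and then Dehn fill $Y$ along $\alpha$.

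Using the HNN presentation $\pi_1(M) = \langle a, b, t \mid a^2 b^2, [t, ab]\rangle$, the filling on $T_1$ kills $\phi_0 \simeq ab$, which forces $b = a^{-1}$ and trivializes both relators. Hence $\pi_1(Y) \cong F_2$, the free group on $\{a, t\}$. Note that the slope $\phi_0 = ab$ on $T_2$ also maps to the identity in $\pi_1(Y)$, so by Dehn's lemma it bounds an essential properly embedded disk in $Y$, showing $Y$ is reducible. Applying the Kneser--Milnor prime decomposition together with Grushko's theorem, the splitting must take the form $Y = X \# V$ with $\pi_1(X) \cong \pi_1(V) \cong \mathbb{Z}$, where $T_2$ lies in the summand $V$. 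The closed summand $X$ is then the unique orientable $S^2$-bundle over $S^1$, namely $S^2 \times S^1$; the bounded summand $V$ is irreducible with torus boundary and $\pi_1 \cong \mathbb{Z}$, hence a solid torus. The alternative of a single irreducible prime with $\pi_1 \cong F_2$ and torus boundary is ruled out by Euler characteristic: such a manifold would be aspherical with $\chi = -1$, contradicting $\chi(Y) = \tfrac{1}{2}\chi(\partial Y) = 0$.

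Next I identify the meridian of $V$. On $T_2$, the slopes $\phi_0 = ab$ and $t$ form a basis for $\pi_1(T_2) \cong \mathbb{Z}^2$. In $\pi_1(Y) = \langle a, t\rangle$, $\phi_0$ is trivial while $t$ is a free generator, so $\phi_0$ becomes the identity in $\pi_1(V) \cong \mathbb{Z}$ while $t$ generates it. Thus $\phi_0$ is the meridian of $V$ and $t$ a longitude. Now Dehn filling the solid torus $V$ along $\alpha$ with $\Delta(\alpha, \phi_0) = p$ yields $S^2 \times S^1$ when $p = 0$, $S^3$ when $p = 1$, and a lens space $L(p, q)$ when $p > 1$. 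Taking the connect sum with $S^2 \times S^1$ gives the three cases of the lemma.

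The main obstacle is the prime decomposition step---justifying the precise structure of the summands and ruling out alternatives via Grushko's theorem and the Euler characteristic argument. After this, the remainder reduces to routine fundamental-group arithmetic together with the classical Dehn fillings of a solid torus.
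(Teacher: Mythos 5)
Your proof is correct, and it shares the paper's starting point --- killing $\phi_0\simeq ab$ in the HNN presentation and invoking the Kneser conjecture/prime decomposition --- but the endgame is genuinely different. The paper fills both boundary components first, computes $\pi_1(M(\phi_0,\alpha))\cong\mathbb{Z}*\mathbb{Z}/p$, splits off the summands by Whitehead's proof of Kneser's conjecture, and then identifies them by Geometrization; in particular the lens space summand is recognized only via elliptization (``a closed $3$--manifold with $\pi_1\cong\mathbb{Z}/p$ is $L(p,q)$''). You instead analyze the intermediate manifold $M(\phi_0,-)$ geometrically, identify it as $(S^2\times S^1)\,\#\,(\text{solid torus})$ with $\phi_0$ the meridian of the solid torus summand, and then perform the second filling explicitly, so the $L(p,q)$ (and the $S^3$ in the $p=1$ case, and the extra $S^2\times S^1$ when $p=0$) drop out of the classical genus-one Heegaard splitting classification. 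This buys you a proof that avoids Perelman's work entirely --- everything you use (sphere theorem, Grushko/Kurosh, ``irreducible with $\pi_1\cong\mathbb{Z}$ and boundary is a solid torus'') is classical --- at the cost of a slightly longer decomposition argument; it is in spirit closer to the paper's alternative branched-cover remark, where the unknotted component likewise accounts for the $S^2\times S^1$ summand and the rational tangle replacement produces the lens space. One small point worth tightening: the sentence ``$\phi_0$ bounds an essential disk, showing $Y$ is reducible'' needs the extra observation that $Y$ is not a solid torus (compressing the boundary torus yields a sphere, which cannot bound a ball since $\pi_1(Y)\cong F_2\ne\mathbb{Z}$); as written the reducibility does not follow from boundary-compressibility alone, though the subsequent Kneser--Milnor/Grushko argument carries the real weight and does not depend on this remark.
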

\begin{proof}
Since
\[
\pi_1(M) \cong \langle a,b,t \mid  a^2b^2, [t,ab] \rangle
\]
and $\phi_0 \simeq ab$, we have that
\begin{align*}
\pi_1(M(\phi_0,-)) & \cong \langle a,b,t \mid a^2b^2, [t,ab] \rangle / \langle \langle ab \rangle \rangle \\ & \cong \langle a,b,t \mid ab  \rangle.
\end{align*}
In other words, $\pi_1(M(\phi_0,-)) \cong \mathbb{Z} * \mathbb{Z}$.  If $\alpha = p \mu + q \phi_0$, then
\begin{align*}
\pi_1(M(\phi_0,\alpha)) & \cong \langle a,b,t \mid ab \rangle / \langle \langle t^p (ab)^q \rangle \rangle \\
& \cong \mathbb{Z} * \mathbb{Z}/p.
\end{align*}
By Whitehead's proof of Kneser's conjecture \cite{whitehead}, $M(\phi_0,\alpha)$ is a connect-sum of closed manifolds $Y_1$ and $Y_2$ with  $\pi_1(Y_1)\cong\mathbb{Z}$ and $\pi_1(Y_2)\cong \mathbb{Z}/p$.  Geometrization now establishes the lemma.
\end{proof}

\begin{remark}
Alternatively, Lemma~\ref{lem:fibreablefilling} follows from considering $M(\phi_0,-)$ as the double branched cover of a tangle as in Figure~\ref{fig:montesinos}.  The unknotted component gives rise to the $S^2 \times S^1$ summand.  Dehn filling corresponds to attaching a rational tangle, which (ignoring the unknotted component) produces a two-bridge link and exhibits the lens space connect-summand.
\end{remark}

\begin{proposition}\label{prop:nofibres}
For any $\alpha$ on $T_2$ with $\Delta(\alpha,\phi_0) >1$, the manifold $M(-,\alpha)$ does not fiber.
\end{proposition}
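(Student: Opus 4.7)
The plan is to argue by contradiction using the Seifert structure on $Y=M(-,\alpha)$. Suppose $Y$ fibers over $S^1$ with compact fiber $F$, giving a surjection $\rho\co\pi_1(Y)\to\zz$ whose kernel $\pi_1(F)$ is finitely generated. Set $p=\Delta(\alpha,\phi_0)>1$ and let $O$ denote the base orbifold of $Y$: a M\"obius band with one cone point of order $p$. Since $\chi^{\mathrm{orb}}(O)<0$, the regular fiber $h$ has infinite order and generates an infinite cyclic normal subgroup sitting in the short exact sequence
\[
1\longrightarrow\langle h\rangle\longrightarrow\pi_1(Y)\longrightarrow\pi_1^{\mathrm{orb}}(O)\longrightarrow 1,
\]
and a direct computation identifies $\pi_1^{\mathrm{orb}}(O)\cong\zz\ast\zz/p=\langle e,c\mid c^p\rangle$, with $e$ the core circle of the M\"obius band and $c$ a small loop around the cone point.

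The first key step exploits the non-orientability of the base. Let $\tilde e\in\pi_1(Y)$ be a lift of the orientation-reversing class $e$; the standard transport formula for Seifert fibrations with orientable total space over a non-orientable base yields $\tilde e\,h\,\tilde e^{-1}=h^{-1}$. Applying $\rho$ to this identity gives $\rho(h)=-\rho(h)$ in $\zz$, so $\rho(h)=0$. Consequently $\rho$ factors through the quotient by $\langle h\rangle$ as a surjection $\bar\rho\co\zz\ast\zz/p\to\zz$; torsion-freeness of $\zz$ forces $\bar\rho(c)=0$, so $\bar\rho$ is (up to sign) the map $e\mapsto 1$, $c\mapsto 0$, and $\ker\bar\rho$ equals the normal closure $\langle\langle c\rangle\rangle$ of $c$ in $\zz\ast\zz/p$.

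It remains to argue that $\ker\bar\rho$ is not finitely generated, which will complete the contradiction: the projection $\pi_1(Y)\to\zz\ast\zz/p$ restricts to a surjection $\pi_1(F)=\ker\rho\twoheadrightarrow\ker\bar\rho$, so $\pi_1(F)$ itself would have to be infinitely generated, in violation of the compactness of $F$. The cleanest way to establish the infinite generation is via a covering-space computation: model $\zz\ast\zz/p$ as $\pi_1(S^1\vee X_p)$ for a $2$-complex $X_p$ with $\pi_1(X_p)=\zz/p$, and observe that the $\zz$-cover classified by $\bar\rho$ is a real line with one copy of $X_p$ wedged on at each integer point, so $\ker\bar\rho\cong\ast_{n\in\zz}\zz/p$ with abelianization $\bigoplus_{n\in\zz}\zz/p$. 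This infinite-generation check is the main (rather minor) delicate point of the argument, and it is precisely where the hypothesis $p>1$ enters essentially: for $p=1$ one has $\pi_1^{\mathrm{orb}}(O)\cong\zz$ and $\ker\bar\rho$ is trivial, so no contradiction is available and the proposition's hypothesis $\Delta(\alpha,\phi_0)>1$ is necessary.
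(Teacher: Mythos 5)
Your argument is correct, but it takes a genuinely different route from the paper's. The paper never works with the Seifert structure on $M(-,\alpha)$ directly: it observes that a fibration of $M(-,\alpha)$ would have fiber meeting $T_1$ in the longitude $\phi_0$, so the filling $M(\phi_0,\alpha)$ would fiber as well; by Lemma~\ref{lem:fibreablefilling} that filling is $S^2\times S^1 \# L(p,q)$, whose nonvanishing $\pi_2$ forces the fiber to be $S^2$ or $\mathbb{R}P^2$, which is impossible since $\mathbb{Z}\ast\mathbb{Z}/p$ admits no surjection onto $\mathbb{Z}$ with finite kernel. You instead stay on the open manifold and run a Stallings-type argument: the non-orientability of the base forces the fiber class $h$ to die under any homomorphism $\rho\co\pi_1(M(-,\alpha))\to\mathbb{Z}$ (via $\tilde e h\tilde e^{-1}=h^{-1}$, which one can verify directly from $a(ab)a^{-1}=(ab)^{-1}$ in the paper's presentation), so $\rho$ factors through $\pi_1^{\mathrm{orb}}=\mathbb{Z}\ast\mathbb{Z}/p$, and the kernel $\langle\langle c\rangle\rangle\cong \ast_{n\in\mathbb{Z}}\mathbb{Z}/p$ is infinitely generated, contradicting compactness of the fiber. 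Each step of yours checks out, including the surjection $\ker\rho\twoheadrightarrow\ker\bar\rho$ and the covering-space computation of $\ker\bar\rho$. What the paper's route buys is brevity given that Lemma~\ref{lem:fibreablefilling} is already in hand (at the cost of invoking Kneser--Whitehead and Geometrization there); what yours buys is independence from that lemma and from the connected-sum analysis altogether, plus immediate applicability to the generalizations in the closing remark (Seifert structures over $\mathbb{R}P^2$ or a M\"obius band with any number of cone points), since the only inputs are non-orientability of the base and the presence of at least one cone point. Your closing observation that $p>1$ is genuinely needed is also correct: for $p=1$ one recovers $N$ itself, which does fiber (with annulus fiber).
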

\begin{proof}
Suppose that $M(-, \alpha)$ fibers.  Since $\phi_0$ is the longitude, this is the only filling that extends the fibration on $M(-,\alpha)$ as any other filling of $M(-,\alpha)$ results in a rational homology sphere.  By Lemma~\ref{lem:fibreablefilling}, $M(\phi_0,\alpha) \cong S^2 \times S^1 \# L(p,q)$ for $p = \Delta(\phi_0,\alpha) \geq 2$.  Since $M(\phi_0,\alpha)$ is fibered and $\pi_2(M(\phi_0,\alpha)) \neq 0$, the fiber surface $F$ must also have $\pi_2(F)\ne0$ by the long exact sequence for a fibration. Hence $F$ must be $S^2$ or $\mathbb{R}P^2$.  However, $\pi_1(M(\phi_0,\alpha))$ is not the fundamental group of such a fibration, since it does not admit a surjective homomorphism onto $\mathbb{Z}$ with finite kernel.
\end{proof}

\begin{proof}[Proof of Theorem~\ref{thm:maintheorem}]
Fix $\alpha$ with $\Delta(\alpha,\phi_0) \geq 2$.  As the fiber slope of the Seifert structure on $M(-,\alpha)$ is the longitude, all non-longitudinal fillings will extend the Seifert structure, yielding a base orbifold $\mathbb{R}P^2$ with two cone points.  By \cite[Proposition 5]{bgw}, such manifolds are always L-spaces.  Proposition~\ref{prop:nofibres} shows that $M(-,\alpha)$ is not fibered.  Furthermore, $M(-,\alpha)$ is irreducible, since the only orientable, reducible Seifert manifolds are $S^2 \times S^1$ and $\mathbb{R}P^3 \# \mathbb{R}P^3$ (and in particular, are closed). Finally, $M(-,\alpha)$ is the complement of a knot in an elliptic manifold as observed in Section \ref{sec:prelims}.
\end{proof}

\begin{remark}
Further examples may be constructed in an analogous way by removing a regular fiber from any manifold which has a Seifert structure with base orbifold $\mathbb{R}P^2$ with any positive number of singular fibers.
It is also possible to construct examples, in a similar manner, admitting Sol geometry. The main observation is that every Sol rational homology sphere is an L-space \cite[Theorem 2]{bgw}. Since every such L-space arises by identifying two twisted $I$-bundles along the boundary tori, one may consider the complement of the knot $K_0$ in one of the twisted $I$-bundles. In this setting, our construction goes through almost verbatim, having noticed that the obvious essential torus must be horizontal to the purported fibration of the exterior of $K_0$.\end{remark}

\begin{question}
All of our examples relied on the presence of an essential annulus, and have non-hyperbolic exterior. Do there exist examples of hyperbolic, non-fibered knots for which every non-longitudinal surgery is an L-space?
\end{question}

\subsection*{Acknowledgments}
The authors thank Ciprian Manolescu and Yi Ni for their comments on and interest in this problem. This paper owes its existence to the Workshop on Topics in Dehn Surgery, held at UT Austin in April 2012. The authors thank the organizers for putting together a great conference.

\bibliographystyle{plain}
\bibliography{references}

\end{document}